    \def\ps@pprintTitle{%
      \let\@oddhead\@empty
      \let\@evenhead\@empty
      \def\@oddfoot{\reset@font\hfil\thepage\hfil}
      \let\@evenfoot\@oddfoot
    }
\newtheorem{theorem}{Theorem}
\newtheorem{corollary}{Corollary}
\newtheorem{proposition}{Proposition}
\newproof{proof}{Proof}
\begin{document}

\title{Decomposing Weighted Graphs}
\author{Amir Ban}
\ead{amirban@netvision.net.il}
\address{Blavatnik School of Computer Science, Tel Aviv University, Tel Aviv, Israel}
\date{}

\begin{abstract}
We solve the following problem: Can an undirected weighted graph $G$ be partitioned into two non-empty induced subgraphs satisfying minimum constraints for the sum of edge weights at vertices of each subgraph? We show that this is possible for all constraints $a(x), b(x)$ satisfying $d_G(x) \geq a(x) + b(x) + 2W_G(x)$, for every vertex $x$, where $d_G(x), W_G(x)$ are, respectively, the sum and maximum of incident edge weights.
\end{abstract}

\maketitle

\section{Introduction}

All graphs considered in this paper are finite, undirected and weighted. A weighted graph is a triple $G = (V, E, w)$ such that  $(V, E)$ is an undirected simple finite graph and $w : E \mapsto \mathbb{R}_{>0}$ is a weight function. Where $xy \notin E$, we further define $w_{xy} = w_{yx} = 0$.

We denote by $V(G)$ the vertex set of a graph $G$. The degree of vertex $x$ with respect to $G$ is denoted by $d_G(x)$ and is the sum of its incident edge weights: $d_G(x) = \sum\limits_{y \in V(G)} w_{xy}$. If $G(X)$ is the subgraph induced by $X$, we use the notation $d_X(x)$ as shorthand for $d_{G(X)}(x)$.

We denote by $W_G(x)$ the maximum weight (not including loop edges) of an edge of $G$ incident to $x$: $W_G(x) := \max\limits_{y \in V(G), x \ne y} w_{xy}$. $W(x)$ stands for $W_G(x)$ when the context is clear.

$(A, B)$ is called a {\em partition} of a set $V$ if $A, B$ are disjoint, non-empty subsets of $V$ whose union is $V$.

Stiebitz \cite{Stiebitz} proved the following decomposition result for a simple, undirected graph $G$: Let $a, b: V(G) \mapsto \mathbb{N}$ be two functions, and assume $d_G(x) \geq a(x) + b(x) + 1$ for every $x \in V(G)$. Then there is a partition $(A,B)$ of $V(G)$ such that
\begin{enumerate}[(1)]
\item $d_A(x) \geq a(x)$ for every $x \in A$, and
\item $d_B(x) \geq b(x)$ for every $x \in B$.
\end{enumerate}

Stiebitz' result does not lend itself to a natural generalization to weighted graphs, because of the restriction to integers on the vertex functions $a, b$. If this is relaxed, the theorem breaks. The Stiebitz proof in several places relies on vertex degrees being integers.

Nonetheless, in this paper we generalize this result to undirected weighted graphs, and base our proof closely on Stiebitz' proof. We prove the following result:

\begin{theorem}
\label{weighted}
Let $G$ be a graph without loop edges, and $a, b : V(G) \mapsto \mathbb{R}_{\geq 0}$ two functions. Assume that $d_G(x) \geq a(x) + b(x) + 2W_G(x)$ for every vertex $x \in V(G)$. Then there is a partition $(A,B)$ of $V(G)$ such that
\begin{enumerate}[(1)]
\item $d_A(x) \geq a(x)$ for every $x \in A$, and
\item $d_B(x) \geq b(x)$ for every $x \in B$.
\end{enumerate}

\end{theorem}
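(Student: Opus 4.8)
The plan is to recast the statement as an optimization and attack it by an extremal (exchange) argument in the spirit of Stiebitz, with the role played by integrality in his proof taken over by the slack $2W_G(x)$. For a partition $(A,B)$ define the total deficiency
\[
\Phi(A,B)=\sum_{x\in A}\bigl(a(x)-d_A(x)\bigr)^{+}+\sum_{x\in B}\bigl(b(x)-d_B(x)\bigr)^{+},
\]
where $t^{+}=\max(t,0)$. A partition satisfies (1) and (2) exactly when $\Phi(A,B)=0$, so it suffices to produce a partition with both parts non-empty and $\Phi(A,B)=0$. I would take $(A,B)$ to minimize $\Phi$ among all partitions of $V(G)$ into two non-empty parts and argue that this minimum is $0$. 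Observe first that the hypothesis $d_G(x)\ge a(x)+b(x)+2W_G(x)$ forces $d_G(x)\ge a(x)$ and $d_G(x)\ge b(x)$, so the two degenerate splits $(V,\emptyset)$ and $(\emptyset,V)$ already have zero deficiency; the entire difficulty is therefore to reach deficiency $0$ without collapsing a part.

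The engine of the argument is the following use of the slack. Suppose $y\in B$ is deficient, i.e. $d_B(y)<b(y)$. Moving $y$ to $A$ makes its induced degree $d_{A\cup\{y\}}(y)=d_G(y)-d_B(y)>d_G(y)-b(y)\ge a(y)+2W_G(y)$, so $y$ not only becomes satisfied on the $A$-side but acquires a surplus strictly larger than $2W_G(y)$; symmetrically for a deficient vertex of $A$ moved to $B$. Thus every wrongly placed vertex can be relocated so that it is satisfied with a margin exceeding twice its largest incident weight. This is precisely the point at which $d_G(x)\ge a(x)+b(x)+2W_G(x)$ is used, and it is the continuous analogue of the $+1$ that Stiebitz exploited through integrality.

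The main obstacle is to convert these local relocations into a global decrease of $\Phi$ while keeping both parts non-empty. Relocating a deficient $y$ from $B$ to $A$ can create new deficiency at its neighbours remaining in $B$, each losing at most $W_G$ along the edge to $y$; the danger is that the sum of these induced deficiencies cancels the deficiency removed at $y$, so that no single move strictly improves $\Phi$. I expect the crux to be an accounting argument at a $\Phi$-minimiser showing this cannot persist: one considers a minimal deficient cluster (the component of deficient, together with barely satisfied, vertices joined through tight edges) and relocates it as a whole. Each vertex of the cluster lands on the other side with surplus exceeding $2W_G$ by the computation above, while each boundary vertex loses at most $W_G$ per incident cluster edge; calibrating the $2W_G$ slack against these boundary losses should yield a net strict decrease of $\Phi$, contradicting minimality unless $\Phi=0$. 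Running in parallel, the same surplus-versus-loss bookkeeping must guarantee that neither part is ever emptied, the delicate case being a part that has shrunk to a single still-deficient vertex, which together with a direct check of the small cases ($|V(G)|\le 2$) completes the argument. Controlling the multiplicity of these boundary losses, and doing so simultaneously with the non-emptiness bookkeeping, is where I expect essentially all of the work to lie.
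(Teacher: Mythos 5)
Your reduction to minimizing the total deficiency $\Phi$ is not carried out at its crucial point, and the plan you sketch for that point runs into an obstruction that is not mere bookkeeping but a defect of the functional itself. The difficulty you correctly identify --- that relocating a deficient vertex $y$ (or a ``deficient cluster'' $C$) from $B$ to $A$ removes a possibly tiny deficiency at $y$ while creating deficiency at every neighbour left behind --- cannot be repaired by calibrating against the $2W_G$ surplus of the moved vertices, because $\Phi$ is a sum of \emph{truncated} terms: the surplus acquired by relocated vertices is capped at zero and can never offset new deficiency appearing at other vertices. Moreover, a boundary vertex $z\in B\setminus C$ does not lose ``at most $W_G(z)$ per incident cluster edge'' in any useful sense; it loses $\sum_{c\in C}w_{zc}$ in total, which can be many multiples of $W_G(z)$, and the cluster forced by tight edges can grow until it swallows all of $B$, at which point the move is forbidden by non-emptiness and the minimality of $\Phi$ yields no contradiction. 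So the assertion ``a $\Phi$-minimizing partition has $\Phi=0$'' is exactly the missing theorem, and nothing in your proposal proves it.

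The paper's proof (following Stiebitz) is structured to sidestep precisely these two problems. First, it shows that any stable \emph{pair} --- two disjoint non-empty sets, each satisfying its own constraint, not necessarily covering $V(G)$ --- extends greedily to a stable partition (Proposition \ref{complete}); this eliminates your non-emptiness bookkeeping entirely, since it suffices to exhibit disjoint non-empty stable subsets. Second, instead of a truncated deficiency it maximizes, over \emph{meager} partitions, the linear potential
\begin{align*}
h(A,B)=\sum_{x\in A,y\in A}w_{xy}+\sum_{x\in B,y\in B}w_{xy}+\sum_{x\in A}b(x)+\sum_{x\in B}a(x),
\end{align*}
whose change under moving a single vertex $y'$ from $B$ to $A$ is exactly $d_{A'}(y')-d_B(y')+b(y')-a(y')$: no truncation, no per-neighbour accounting, and the hypothesis $d_G(x)\geq a(x)+b(x)+2W(x)$ makes this strictly positive for a suitably chosen $y'$ (one witnessing $b$-meagerness of $G(B)$). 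The case analysis --- either no meager partition exists, in which case a minimal set $A$ with $d_A(x)\geq a(x)$ works directly, or one exists and the $h$-maximal one yields stable subsets $\bar A\subseteq A$, $\bar B\subseteq B$ --- is what converts the extremal partition into a stable pair. If you wish to salvage your approach, the lesson is to replace $\Phi$ by a potential whose increments under vertex moves are exact rather than truncated, and to decouple non-emptiness from the exchange argument by first proving the pair-to-partition extension lemma.
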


\section{Proof of Theorem \ref{weighted}}

Let $G$ be a graph and $a, b : V(G) \mapsto \mathbb{R}_{\geq 0}$ two functions such that
\begin{align*}
d_G(x) \geq a(x) + b(x) + 2W(x)
\end{align*}

\noindent for every $x \in V(G)$.

Let $f : V(G) \mapsto \mathbb{R}_{\geq 0}$ be a function. $G$ is said to be {\em $f$-meager} if for every induced subgraph $H$ of $G$ there is a vertex $x \in V(H)$ such that $d_H(x) < f(x) + W(x)$.\footnote{Stiebitz uses $d_H(x) \leq f(x)$ and names it {\em $f$-degenerate}, which is the standard terminology for this simple-graph property. The generalization used here is non-standard and non-obvious, and therefore we call it differently.}

We say a pair $(A,B)$ is {\em stable} if $A$ and $B$ are disjoint, non-empty subsets of $V(G)$ such that
\begin{enumerate}[(1)]
\item $d_A(x) \geq a(x)$ for every $x \in A$, and
\item $d_B(x) \geq b(x)$ for every $x \in B$.
\end{enumerate}

We have to show that there is a stable partition of $V(G)$. Following \cite{Stiebitz}, we make the following observation.

\begin{proposition}
\label{complete}
If there exists a stable pair, then there exists a stable partition of $V(G)$, too.
\end{proposition}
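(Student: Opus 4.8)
The plan is to argue by extremality. Among all stable pairs of $V(G)$, I would fix one, call it $(A,B)$, for which the number of covered vertices $|A \cup B| = |A| + |B|$ is as large as possible; such a maximizer exists because $V(G)$ is finite and a stable pair is assumed to exist. Writing $R := V(G) \setminus (A \cup B)$, it then suffices to prove that $R = \emptyset$, since a stable pair with $A \cup B = V(G)$ is by definition a stable partition.

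Suppose toward a contradiction that $R \neq \emptyset$. The first step records what maximality buys at each uncovered vertex. For every $x \in R$, the pair $(A,\, B \cup \{x\})$ has a strictly larger covered set, so by the choice of $(A,B)$ it cannot be stable. Since adjoining $x$ to $B$ does not decrease any vertex degree, every constraint on the unchanged set $A$ and on the old vertices of $B$ still holds, so the only possible failure is at $x$ itself. Hence $\sum_{y \in B} w_{xy} < b(x)$ for every $x \in R$.

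The main point, and the step where a naive one-vertex-at-a-time extension gets stuck, is that an individual uncovered vertex may be addable to neither $A$ nor $B$, because most of its incident weight could lie inside $R$. I would circumvent this by enlarging $A$ by all of $R$ at once. Consider $(A \cup R,\, B)$, a pair of disjoint non-empty sets whose covered set strictly contains $A \cup B$. The constraints on the untouched $B$ and on the old vertices of $A$ survive because degrees only grow. For a vertex $x \in R$, decomposing $d_G(x)$ over the disjoint sets $A$, $B$, $R$ gives $\sum_{y \in A \cup R} w_{xy} = d_G(x) - \sum_{y \in B} w_{xy}$, and combining the bound from the previous step with the hypothesis $d_G(x) \geq a(x) + b(x) + 2W(x)$ yields $\sum_{y \in A \cup R} w_{xy} > a(x) + 2W(x) \geq a(x)$. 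Thus $(A \cup R,\, B)$ is a stable pair with a strictly larger covered set, contradicting the maximality of $(A,B)$. Therefore $R = \emptyset$, and $(A,B)$ is the desired stable partition.
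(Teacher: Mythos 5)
Your proof is correct, and it rests on the same extremal skeleton as the paper's: fix a stable pair $(A,B)$ whose covered set is maximum, suppose the uncovered set $R$ (the paper calls it $C$) is non-empty, and use the hypothesis $d_G(x) \geq a(x) + b(x) + 2W(x)$ to build a stable pair covering more vertices. The difference is that you execute the two augmentation steps in the dual order. The paper tests the single bulk pair $(A, B \cup C)$: its instability yields one vertex $x \in C$ with $d_{B \cup C}(x) < b(x)$, whence $d_A(x) > a(x) + 2W(x) \geq a(x)$, and the contradiction comes from the single-vertex move $(A \cup \{x\}, B)$. You instead test $(A, B \cup \{x\})$ separately for each $x \in R$, obtaining the weaker per-vertex bound $d_B(x) < b(x)$ but for \emph{every} $x \in R$, and then compensate by moving all of $R$ across at once, so that edges inside $R$ and from $R$ to $A$ are counted on the $A$-side: $d_{A \cup R}(x) = d_G(x) - d_B(x) > a(x) + 2W(x) \geq a(x)$. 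Both arguments close in one round using the same key inequality. The paper's version needs only one distinguished vertex and slightly less bookkeeping; yours has the structural advantage that the contradicting pair $(A \cup R, B)$ already covers $V(G)$, so your argument can be read as directly exhibiting the desired stable partition, the contradiction being invoked only to secure the bounds $d_B(x) < b(x)$.
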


\begin{proof}
Let $(A,B)$ be a stable pair such that $A \cup B$ is maximal. We need only to show that $A \cup B = V(G)$. Suppose not, i.e. $C = V(G) \setminus (A \cup B)$ is non-empty. Then the maximality of $A \cup B$ implies that $(A,B \cup C)$ is not stable. Therefore, there is a vertex $x \in C$ such that $d_{B \cup C}(x) < b(x)$. Since $d_G(x) \geq a(x) + b(x) + 2W(x)$, $d_A(x) > a(x) + 2W(x) \geq a(x)$. But then $(A \cup \{x\},B)$ is a stable pair, contradicting the maximality of $A \cup B$. This proves the proposition.
\qed
\end{proof}

We define a {\em meager partition} of $V(G)$ as a partition $(A,B)$ of $V(G)$ such that $G(A)$ is $a$-meager and $G(B)$ is $b$-meager.

We define a function $h(A,B)$ of a partition $(A,B)$ as
\begin{align*}
h(A,B) = \sum\limits_{x \in A,y \in A} w_{xy} + \sum\limits_{x \in B,y \in B} w_{xy} + \sum\limits_{x \in A} b(x) + \sum\limits_{x \in B} a(x)
\end{align*}

For the proof of Theorem \ref{weighted} we consider two possible cases.

\begin{itemize}
\item There is no meager partition of $V(G)$. Then, among all non-empty subsets of $V(G)$ select one, say $A$, such that
\begin{enumerate}[(i)]
\item \label{first} $d_A(x) \geq a(x)$ for all $x \in A$, and
\item \label{second} $|A|$ is minimum subject to (\ref{first})
\end{enumerate}

Let $B = V(G) \setminus A$. Since $V(G) \setminus \{v\}$ satisfies (\ref{first}) for each vertex $v$, $A$ exists and is a proper subset of $V(G)$. Hence $B$ is non-empty. Because of (\ref{second}), for every non-empty proper subset $A'$ of $A$ there is a vertex $x \in A'$ such that $d_{A'}(x) < a(x)$. This implies that $d_A(x) < a(x) + W(x)$ for some $x \in A$. Hence $G(A)$ is $a$-meager. Clearly $G(B)$ is not $b$-meager, since otherwise $(A,B)$ would be a meager partition of $V(G)$. Therefore, there is a non-empty subset $B'$ of $B$ such that $d_{B'}(x) \geq b(x) + W(x) \geq b(x)$ for all $x \in B'$. Then $(A,B')$ is a stable pair and, by Proposition \ref{complete}, there is a stable partition of $V(G)$.

\item There is a meager partition of $V(G)$. Then let $(A,B)$ be a meager partition of $V(G)$ such that $h(A,B)$ is maximum. $G(A)$ being $a$-meager, there is a vertex $x \in A$ such that $d_A(x) < a(x) + W(x)$. Since $d_G(x) \geq a(x) + b(x) + 2W(x)$, $d_B(x) > b(x) + W(x)$. This implies that $|B| \geq 2$. By symmetry we also have $|A| \geq 2$.

Next, we claim that there is a non-empty subset $\bar{A} \subseteq A$ such that $d_{\bar{A}}(x) \geq a(x)$ for all $x \in \bar{A}$. Suppose not. Then, clearly, for each $y \in B$, $G(A \cup \{y\})$ is $a$-meager. $G(B)$ being $b$-meager, there is a vertex $y' \in B$ such that $d_B(y') < b(y') + W(y')$. Let $A' = A \cup \{y'\}$ and $B' = B \setminus \{y'\}$. Obviously, $B'$ is non-empty. Now, we easily conclude that $(A',B')$ is a meager partition of $V(G)$. Since $d_G(y') \geq a(y') + b(y') + 2W(y')$ and $d_B(y') < b(y') + W(y')$, we have $d_{A'}(y') > a(y') + W(y')$ and, therefore,
\begin{align*}
h(A',B') - h(A,B) = d_{A'}(y') - d_B(y') + b(y') - a(y')  > 0
\end{align*}

\noindent contradicting the maximality of $h(A,B)$. This proves the claim. By symmetry there is a non-empty subset $\bar{B} \subseteq B$ such that $d_{\bar{B}} \geq b(x)$ for all $x \in \bar{B}$. Then $(\bar{A},\bar{B})$ is a stable pair, and, by Proposition \ref{complete}, there is a stable partition of $V(G)$.

This completes the proof of Theorem \ref{weighted}.
\end{itemize}

\section{Concluding Remarks}

Theorem \ref{weighted} is tight in view of graphs where $w_{xy} = 1$ for all $x \neq y$: E.g. $K_9$ with unit edge weights has degree $8$ for every vertex. Setting $a(x) = b(x) = 3 + \epsilon$, no stable partition exists for any $\epsilon > 0$.

We can generalize Theorem \ref{weighted} to the case of a weighted undirected graph with loops $G = (V, E, w)$: We now allow $w_{xx} > 0$, and even $w_{xx} > W(x)$, for every $x \in V$.

\begin{corollary}
\label{loop}
Let $G$ be a graph with loops and $a, b : V(G) \mapsto \mathbb{R}_{\geq 0}$ two functions. Assume that $d_G(x) \geq a(x) + b(x) + 2W_G(x) - 2w_{xx}$ for every vertex $x \in V(G)$. Then there is a partition $(A,B)$ of $V(G)$ such that
\begin{enumerate}[(1)]
\item $d_A(x) \geq a(x)$ for every $x \in A$, and
\item $d_B(x) \geq b(x)$ for every $x \in B$.
\end{enumerate}
\end{corollary}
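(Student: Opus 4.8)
The plan is to reduce Corollary~\ref{loop} to Theorem~\ref{weighted} by deleting the loops and charging their weight against $a$ and $b$. Let $G'$ be the loopless graph obtained from $G$ by removing every loop. A loop at $x$ is incident to $x$ at both of its endpoints, so it contributes $2w_{xx}$ to $d_G(x)$; hence $d_{G'}(x) = d_G(x) - 2w_{xx}$ for every vertex $x$, while $W_{G'}(x) = W_G(x)$ because $W$ never counts loops in the first place. I would then define $a'(x) = \max\{a(x) - 2w_{xx},\, 0\}$ and $b'(x) = \max\{b(x) - 2w_{xx},\, 0\}$, which are legitimate nonnegative vertex functions on the loopless graph $G'$.

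Next I would verify that the triple $(G', a', b')$ satisfies the hypothesis of Theorem~\ref{weighted}, namely $d_{G'}(x) \geq a'(x) + b'(x) + 2W_{G'}(x)$ for every $x$. The binding case is when $a(x), b(x) \geq 2w_{xx}$, where $a'(x)+b'(x) = a(x)+b(x)-4w_{xx}$ and the required inequality reads $d_G(x) - 2w_{xx} \geq a(x)+b(x)-4w_{xx}+2W_G(x)$, i.e. $d_G(x) \geq a(x)+b(x)+2W_G(x)-2w_{xx}$, which is precisely the hypothesis of the corollary. When $a(x)$ or $b(x)$ falls below $2w_{xx}$, the corresponding positive part can only decrease $a'(x)+b'(x)$, so the inequality holds \emph{a fortiori}. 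Applying Theorem~\ref{weighted} to $G'$ then yields a partition $(A,B)$ of $V(G')=V(G)$ with $d'_A(x) \geq a'(x)$ for $x \in A$ and $d'_B(x) \geq b'(x)$ for $x \in B$, where $d'$ denotes degrees computed in $G'$.

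It remains to lift this partition back to $G$. For $x \in A$ the loop at $x$ lies inside $A$, so $d_A(x) = d'_A(x) + 2w_{xx} \geq a'(x) + 2w_{xx}$. If $a(x) \geq 2w_{xx}$ this last quantity equals $(a(x)-2w_{xx})+2w_{xx}=a(x)$, while if $a(x) < 2w_{xx}$ then $a'(x)=0$ and $d_A(x) \geq 2w_{xx} > a(x)$; either way $d_A(x) \geq a(x)$. The symmetric computation gives $d_B(x) \geq b(x)$ for $x \in B$, so $(A,B)$ is the desired partition of $V(G)$.

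The step I would be most careful about is the degree bookkeeping for the loop: the factor $2$ in the hypothesis $d_G(x) \geq a(x)+b(x)+2W_G(x)-2w_{xx}$ is exactly the slack freed by removing a loop that was counted twice in $d_G(x)$ but charged only once to each of $a$ and $b$, and getting this accounting right is what makes the reduction tight rather than merely sufficient. The generous regime $w_{xx} > W_G(x)$ causes no trouble: since $W$ ignores loops we have $W_{G'}=W_G$, and an oversized loop simply forces the relevant positive part to $0$, a case in which the lift above succeeds with room to spare.
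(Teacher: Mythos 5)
Your reduction is the same one the paper intends---its entire proof of Corollary~\ref{loop} is the single sentence that it ``follows by applying Theorem~\ref{weighted} on a graph $G'$ derived from $G$ by omitting all loops''---and your bookkeeping is correct in the regime $a(x), b(x) \geq 2w_{xx}$. The flaw is precisely the step you waved through. When, say, $a(x) < 2w_{xx}$, the truncation $a'(x) = \max\{a(x)-2w_{xx},\,0\}$ \emph{raises} $a'(x)+b'(x)$ above $a(x)+b(x)-4w_{xx}$, because $\max\{t,0\} \geq t$; so the inequality you must verify, $d_{G'}(x) \geq a'(x)+b'(x)+2W_{G'}(x)$, is strictly \emph{stronger} than what the corollary's hypothesis supplies, and ``a fortiori'' points in the wrong direction. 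Concretely, take $V=\{x,y\}$, $w_{xy}=1$, $w_{xx}=w_{yy}=10$, $a \equiv 0$, $b \equiv 25$. The corollary's hypothesis holds at both vertices ($d_G(x) = 21 \geq 0+25+2-20 = 7$), yet $a' \equiv 0$, $b' \equiv 5$, and $d_{G'}(x) = 1 < 0+5+2$, so Theorem~\ref{weighted} cannot be invoked on $(G',a',b')$.

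This gap cannot be patched, because Corollary~\ref{loop} as stated is false in exactly this regime. In the example above, every partition of $\{x,y\}$ is a pair of singletons, and whichever vertex lands in $B$ has $d_B = 2w_{xx} = 20 < 25 = b$, so no stable partition exists even though the hypothesis holds with room to spare. (The failure is robust to the loop-degree convention: if a loop contributes $w_{xx}$ rather than $2w_{xx}$ to the degree, the hypothesis $11 \geq 7$ still holds and $d_B = 10 < 25$ still fails.) The underlying reason is that a loop at $x$ can substitute for within-class edges only on the one side that $x$ actually joins, whereas the blanket discount $-2w_{xx}$ in the hypothesis credits it against $a(x)+b(x)$ jointly---an over-count whenever one of the two demands is already below $2w_{xx}$. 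What your argument does establish correctly is the statement with hypothesis $d_{G'}(x) \geq \max\{a(x)-2w_{xx},0\} + \max\{b(x)-2w_{xx},0\} + 2W_G(x)$ for every $x$, which coincides with the paper's hypothesis exactly when $a(x), b(x) \geq 2w_{xx}$ everywhere; that corrected form is what the corollary---and the paper's one-sentence proof of it---should actually say.
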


This follows by applying Theorem \ref{weighted} on a graph $G'$ derived from $G$ by omitting all loops.

We provide an example application for our result, whose solution was the motivation for this paper:

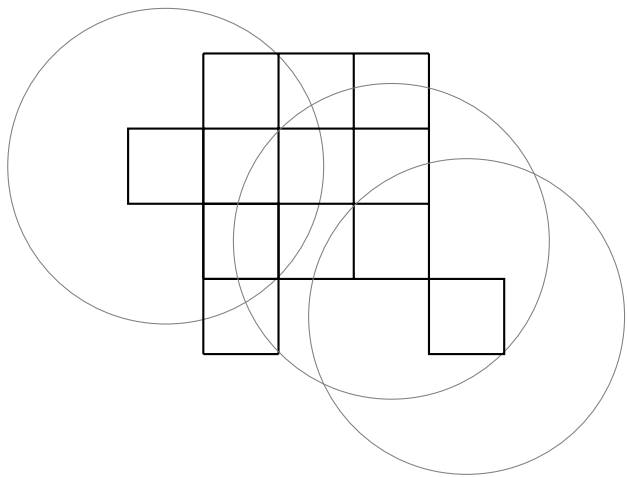
\begin{figure}[tbp]
\centering
\begin{tikzpicture}
\draw [black, thick] (-1,3) rectangle (0,2);
\draw [gray, thin] (-0.5,2.5) circle [radius=2.1];
\draw [black, thick] (0,1) grid (3,4);
\draw [black, thick] (0,0) grid (1,2);
\draw [gray, thin] (2.5,1.5) circle [radius=2.1];
\draw [black, thick] (3,0) rectangle (4,1);
\draw [gray, thin] (3.5,0.5) circle [radius=2.1];
\end{tikzpicture}
\caption[Partitioning squares]{Partitioning squares: 2-color squares so that every circle (radius=2.1) centered on a square center has most of its colored area colored as its center}
\label{squares}
\end{figure}

Let $V$ be a set of grid squares of side $1$ in $\mathbb{R}^2$, and fix a radius $r > 0$ (see Figure \ref{squares}).
Is there a non-trivial partition $(A,B)$ of $V$ such that:
\begin{enumerate}[(1)]
\item For each $x \in A$, a circle of radius $r$ drawn around its centre covers at least as much area in $A$ as in $B$, and
\item For each $x \in B$, a circle of radius $r$ drawn around its centre covers at least as much area in $B$ as in $A$
\end{enumerate}

\noindent ? (the question is easily extended to higher dimensions and to other metrics).

We are able to answer the question in the affirmative: For $r \leq \sqrt{\frac{2}{\pi}}$ square $x$ covers the majority of the radius-$r$ circle drawn around its centre, so any partition satisfies the requirements. So assume $r > \sqrt{\frac{2}{\pi}}$. We build the weighted graph $G(V,E)$ with the square set $V$ serving as the vertex set. The weight of an edge from square $x$ to square $y$, $w_{xy}$, is the area of the part of $y$ whose distance from $x$'s centre is at most $r$. Clearly $w_{xy} = w_{yx}$ and $w_{xy} \leq 1$ for every $x, y \in V$. Also, since $r > \sqrt{\frac{2}{\pi}} > \sqrt{\frac{1}{2}}$, $w_{xx} = 1$ for every $x \in V$. Therefore setting $a(x) = b(x) = d_G(x)/2$ for every $x \in V$, the existence of the sought partition follows from Corollary \ref{loop}.

\section*{References}


\begin{thebibliography}{9} 

\bibitem{Stiebitz} \textsc{Stiebitz, Michael}:\ Decomposing graphs under degree constraints. \textit{Journal of Graph Theory} \textbf{23}, no. 3 (1996): 321-324.

\end{thebibliography}
\end{document}